\def\mobi{{M{\"{o}}bius}}
\newtheorem{theorem}{Theorem}[section]
\newtheorem{lemma}[theorem]{Lemma}
\def\ifl{\iffalse }
\numberwithin{equation}{section}
\newtheorem{corollary}[theorem]{Corollary}
\numberwithin{equation}{section}
\theoremstyle{remark}
\begin{document}

\baselineskip=1.2\baselineskip
\title[ On the dimension distortions of quasi-symmetric  homeomorphisms]
{ On the dimension distortions of quasi-symmetric  homeomorphisms}

\author{Shengjin Huo
}
\address{
Department of Mathematics, Tiangong University, Tianjin 300387, China} \email{huoshengjin@tiangong.edu.cn}




\subjclass[2010]{30F35, 30C62}
\keywords{Conical limit set, escaping geodesic , compact deformation.}
\begin{abstract}
 In this paper, we first  generalize a result of Bishop and Steger [Representation theoretic rigidity in PSL(2, R). Acta Math., 170, (1993), 121-149] by proving that for a Fuchsian group $G$ of divergence type and non-lattice, if $h$ is a quasi-symmetric homeomorphism of the real axis $\mathbb{R}$ corresponding to a quasi-conformal compact deformation of $G$. Then for any $E\subset \mathbb{R}$, we have  max(dim$E$,  dim$h(\mathbb{R}\setminus E))=1$. Furthermore, we showed that Bishop and steger's result does not hold for the covering groups of all '$d$-dimensional jungle gym' (d is any positive integer) which generalizes G\"onye's results [ Differentiability of quasi-conformal maps on the jungle gym. Trans. Amer. Math. Soc. Vol 359 (2007), 9-32] where the author discussed the case of '$1$-dimensional jungle gym'.
\end{abstract}

\maketitle

\section{ Introduction }

Let $G$ be a non-elementary torsion free discrete {\mobi} transformations group  acting on $\bar{\mathbb{R}}^{n}=\mathbb{R}^{n}\cup {\infty}$ or $S^{n}=\partial \mathbb{B}^{n}$; the action of $G$ can extend to the $(n+1$)-dimension  hyperbolic upper half hyperplane $\mathbb{H}^{n+1}=\{(x_{1},\cdot\cdot\cdot, x_{n+1})\in \mathbb{R}^{n+1}:x_{n+1}>0\}$ or the hyperbolic unit ball $\mathbb{B}^{n+1}.$  A discrete group $G$ is called  a Kleinian group if $n=2$ and Fuchsian group if $n=1.$  In this paper, we mainly focus our attention on  Fuchsian groups.

Let $\Lambda(G)$ be the accumulation set of any orbit. A Fuchsian  group $G$ is said to be of the first kind if  the limit set $\Lambda(G)$ is the entire circle. Otherwise, it is of the second kind. A point $x\in \bar{\mathbb{R}}$ is a conical point or radial point of $G$ if there is a sequence of elements $g_{i}\in G$ such that for any $z\in \mathbb{H}$, there exists a constant $C$ and a hyperbolic line $L$ with endpoint $x$ such that the hyperbolic distance between $g_{i}(z)$ and $L$ are bounded by $C.$ Denote by $\Lambda_{c}(G)$ the set of all the conical limit points and $\Lambda_{e}(G)$ the set of all the escaping limit points. Let $S=\mathbb{H}/G$ be the corresponding surface of $G$. The points in $\Lambda_{c}(G)$ are just corresponding to the geodesics in $S$ which return to some compact set infinitely often and the points in $\Lambda_{e}(G)$ are corresponding to the geodesics in $S$ which eventually leave every compact subset of $S$.

For $g$ in $G$, we denote by $\mathcal{D}_{z}(g)$ the closed hyperbolic half-plane containing $z$, bounded by the perpendicular bisector of the segment $[z, g(z)]_{h}$. The Dirichlet fundamental domain $\mathcal{F}_{z}(G)$ of $G$ centered at $z$ is the intersection of all the sets $\mathcal{D}_{z}(g)$ with $g$ in $G-\{id\}$. For simplicity, in this paper we use the notation $\mathcal{F}$ for the Dirichlet fundamental domain $\mathcal{F}_{z}(G)$ of $G$ centered at $z=0.$ A Fuchsian group $\Gamma$ is called a lattice if the area of its one Dirichlet fundamental domain is finite. Moreover, a lattice is said to be uniform if each of its Dirichlet domain is compact, for more details, see \cite{Da}.

 A Fuchsian  group $G$ is said to be of divergence type if $\Sigma_{g\in G}(1-|g(0)|)=\infty$. Otherwise, we say it is of convergence type. All the second kind groups are of convergence type but the converse is not true.

 We call $F$ a quasi-conformal deformation of $G$ if it is a quasi-conformal homeomorphism  of the upper half plane $\mathbb{H}$ such that $$G'=\{g': g'=F\circ g\circ F^{-1} \quad \text {for every } g\in G\}$$ is also a Fuchsian group and a compact quasi-conformal deformation of $G$ if it is just a lifted mapping  of a quasi-conformal mapping $f$ defined on the surface $\mathbb{H}/G$ whose Beltrami coefficients is supported on a compact subset of  $\mathbb{H}/G$. Such a $F$ will extend unique to a homeomorphism of the real axis $\bar{\mathbb{R}}$, denoted by $h$. The homeomorphism $h$ is a quasi-symmetric mapping of $\bar{\mathbb{R}}$.

The quasi-symmetric mappings can be very singular in the measure theoretic sense. It is known that quasi-conformal mapping preserve the null-sets. However, the quasi-symmetric mappings may be very singular, which will not preserve null-sets, see \cite{BA}.

In \cite{Tu}, Tukia showed that, for the unit interval $I=[0,1]$.  there are a quasi-symmetric self mapping of $I$ and  a set $E\subset I$ such that the Hausdorff dimensions of both $I\setminus E$ and $f(E)$ are less than 1. In \cite{BS}, Bishop and Steger got the following result: for a lattice group $G$ (i.e. G is finitely generated of first kind), there is a set $E\subset \mathbb{R}$ such that the hausdorff dimensions of both $E$ and $h(\mathbb{R}\setminus E)$ are less than 1, where $h$ is a quasi-symmetric conjugating homeomorphism of the real axis $\mathbb{R}.$

 Concerning the negative results we first give the definition of the 'd-dimensional jungle gym'.  Let $S_{0}$ be a compact surface of genus $d$ and $G_{0}$ its covering group. Let $N_{0}$ be a normal subgroup of $G_{0}$ such that $G_{0}/N_{0}$  is
isomorphic to $\mathbb{Z}^{d}$. The surface $S^{*}=\mathbb{H}/N_{0}$ is the so called infinite 'd-dimensional jungle
gym', that is, $S^{*}=\mathbb{H}/N_{0}$ can be quasi-isometrically embedded into $\mathbb{R}^{d}$ as a surface $S$
which is invariant under translations $t_{j}$, $1\leq j\leq d$, in $d$ orthogonal directions.
Moreover $S_{0}$ is conformal equivalent to $S/<t_{1}, \cdot\cdot\cdot, t_{d}>$.

In \cite{Go}, G\"{o}nye showed that Tukia-Bishop-Steger's results do not hold for the covering group of '1-dimensional jungle gym.' Gonye constructed a conjugating map $f$ between covering groups of two '1-dimensional jungle gym' with the Beltrami coefficient being compactly supported, for which
$$\text{max}(dim(E), dim f(\mathbb{R}\setminus E))=1$$
for all $E\subset R$.

In this paper we continue to investigate the range of validity of Tukia-Bishop-Steger results. We first show the following result which is essentially due to Fernandez and  Melian \cite{FM}.
\begin{theorem}\label{main1}
 Suppose $G$ is a non-lattice divergence type Fuchsian group. Then $\Lambda_{e}(G)$ has zero 1-dimensional Hausdorff measure, but its Hasudorff dimension is 1.
\end{theorem}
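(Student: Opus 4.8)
The plan is to treat the two assertions separately, since the hypotheses enter in quite different ways: the measure-zero statement is essentially a consequence of the ergodic theory of the geodesic flow, whereas the full-dimension statement needs an explicit construction and is where the non-lattice hypothesis really does the work. Before either step I would record two reductions. First, since every group of the second kind is of convergence type, divergence type forces $G$ to be of the first kind, so $\Lambda(G)=\bar{\mathbb{R}}$ is the whole circle. Second, for a fixed base point the geodesic ray toward a boundary point $\xi$ either returns to some fixed compact subset of $S=\mathbb{H}/G$ infinitely often or eventually leaves every compact set, and these alternatives are exclusive and exhaustive; by the standard identification of conical points with recurrent rays this gives the clean partition $\bar{\mathbb{R}}=\Lambda_c(G)\sqcup\Lambda_e(G)$, exhibiting $\Lambda_e(G)$ as the complement of the conical set.

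For the first assertion I would invoke the Hopf--Tsuji--Sullivan dichotomy: for a Fuchsian group the Poincar\'e series $\sum_{g}(1-|g(0)|)$ diverges if and only if the geodesic flow on $T^{1}S$ is ergodic and conservative, which is in turn equivalent to the conical limit set $\Lambda_c(G)$ carrying full Lebesgue measure on $\bar{\mathbb{R}}$. As $G$ is of divergence type, $\Lambda_c(G)$ is therefore of full measure, so its complement $\Lambda_e(G)$ is Lebesgue-null. On the circle the $1$-dimensional Hausdorff measure coincides with Lebesgue measure, hence $\mathcal{H}^{1}(\Lambda_e(G))=0$, which is the first claim.

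For the second assertion the bound $\dim_H\Lambda_e(G)\le 1$ is automatic, so the content is the matching lower bound, and this is the main obstacle. Here I would use that a non-lattice group of the first kind is necessarily infinitely generated (a finitely generated group of the first kind is a lattice), so $\mathcal{F}$ has infinite area and $S$ possesses a genuinely infinite end through which geodesics may escape; this is exactly the feature absent for lattices, where escaping geodesics only run into cusps and form a set of dimension $\tfrac12$. The idea is to produce escaping geodesics that escape arbitrarily slowly: a geodesic drifting out the infinite end at sub-linear speed is statistically close to the generic recurrent geodesic and so should contribute dimension close to $1$. Concretely, for each $\epsilon>0$ I would build a Cantor set $C_{\epsilon}\subset\Lambda_e(G)$ by fixing an exhaustion $K_1\subset K_2\subset\cdots$ of $S$ and, at the $n$-th stage, selecting a large controlled family of group elements (shadows of orbit points $g(0)$) that push the ray past $K_n$ while retaining many admissible continuations; admitting only such continuations forces every point of the limiting set to be escaping. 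The divergence hypothesis $\sum_{g}(1-|g(0)|)=\infty$ is precisely what guarantees that enough orbit points survive at every scale to keep the branching numbers and gap ratios from decaying, so that a natural mass distribution on $C_{\epsilon}$ obeys a Frostman condition of exponent $1-\epsilon$; the mass distribution principle then yields $\dim_H C_{\epsilon}\ge 1-\epsilon$, and letting $\epsilon\to 0$ gives $\dim_H\Lambda_e(G)=1$.

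The hard part will be the quantitative bookkeeping in this last step: arranging the admissible elements so that their boundary intervals are nested and separated with comparable ratios, verifying that each branch genuinely escapes (leaves every $K_n$ permanently rather than only temporarily), and converting the divergence of the Poincar\'e series into a uniform-in-$n$ lower bound on the local branching, which is what keeps the Frostman exponent near $1$. Reconciling these three requirements is exactly the mechanism carried out by Fern\'andez and Meli\'an \cite{FM}, whose construction I would follow and adapt to the present normalization.
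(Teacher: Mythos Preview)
Your proposal is correct. For the second assertion both you and the paper reduce to the Fern\'andez--Meli\'an construction \cite{FM}; the paper follows \cite{FM} in somewhat more detail (geodesic domains $D_i$ with Poincar\'e exponents $\delta_i\to 1$, a tree $\mathfrak{T}$ of lifted arcs whose tips land in $\Lambda_e(G)$), while your sketch phrases the same mechanism in terms of shadows and a Frostman mass distribution, but the underlying idea and the cited source coincide.

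For the first assertion your route is genuinely different and more direct. You invoke the Hopf--Tsuji--Sullivan dichotomy: divergence type forces $\Lambda_c(G)$ to have full Lebesgue measure, hence $\Lambda_e(G)=\bar{\mathbb{R}}\setminus\Lambda_c(G)$ is Lebesgue-null, and $\mathcal{H}^1$ agrees with Lebesgue measure on the line. The paper instead goes through its own Section~3: it picks a compactly supported deformation, appeals to Bishop--Jones \cite{BJ1} to get $\mathcal{H}^1(F_\mu(\Lambda_e(G)))=0$, and then pulls this back via the bi-Lipschitz estimate (3.9) on the pieces $\Lambda_{n,k}$. Your argument buys simplicity and self-containment (no need for the deformation machinery or \cite{BJ1}); the paper's argument, while more circuitous here, has the advantage of reusing exactly the apparatus it needs anyway for Theorems~\ref{diff} and \ref{main2}, so no extra ingredients are introduced.
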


In \cite{Bi}, Bishop showed that the divergence Fuchsian groups have  Mostow rigidity property, so if $h$ is any quasi-symmetric homeomorphism which conjugates a divergence Fuchsian group to another one, then $h$ is singular, i.e. $h$ is continuous but the derivation of $h$ vanishes almost everywhere in the real axis $\mathbb{R}$. For the  quasi-symmetric homeomorphisms corresponding to a compact deformation of a divergence Fuchsian group, we have

\begin{theorem}\label{main2}
 Let $G$ be a  Fuchsian group of divergence type and non-lattice, and $h$ be  a homeomorphism of the real axis $\mathbb{R}$ corresponding to  a compact deformation of $G$.  Then for any $E\subset R,$ we have
$$\text{max}(\text {dim }(E), \text {dim }h(\mathbb{R}\setminus E))=1.$$
\end{theorem}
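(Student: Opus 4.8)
The plan is to reduce the asserted maximality to a single dimension-preservation statement for $h$ on the escaping limit set and then invoke Theorem \ref{main1}. Since $h(\mathbb{R}\setminus E)\subset\mathbb{R}$ forces $\dim h(\mathbb{R}\setminus E)\le 1$, and likewise $\dim E\le 1$, it suffices to assume $\dim E<1$ and derive $\dim h(\mathbb{R}\setminus E)=1$. By Theorem \ref{main1} we have $\dim\Lambda_{e}(G)=1$. Finite stability of Hausdorff dimension then gives
\[
1=\dim\Lambda_{e}(G)=\max(\dim(\Lambda_{e}(G)\cap E),\,\dim(\Lambda_{e}(G)\setminus E))\le\max(\dim E,\,\dim(\Lambda_{e}(G)\setminus E)),
\]
and because $\dim E<1$ this forces $\dim(\Lambda_{e}(G)\setminus E)=1$.

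The heart of the argument is the claim that $h$ preserves Hausdorff dimension on the escaping set, i.e.\ $\dim h(A)=\dim A$ for every $A\subset\Lambda_{e}(G)$. Granting this and applying it to $A=\Lambda_{e}(G)\setminus E\subset\mathbb{R}\setminus E$, we obtain $\dim h(\mathbb{R}\setminus E)\ge\dim h(\Lambda_{e}(G)\setminus E)=\dim(\Lambda_{e}(G)\setminus E)=1$, which finishes the proof. To prove the claim I would show that $h$ is differentiable with a finite, nonzero derivative at every escaping point; a map that is differentiable with nonvanishing derivative at each point of a set preserves the Hausdorff dimension of that set, since one may decompose the set according to the size of the derivative and the scale at which the difference quotients are controlled, writing it as a countable union of pieces on which $h$ is bi-Lipschitz.

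The geometric input is that the lift $\tilde{\mu}$ of the Beltrami coefficient of the deformation $f$ is supported on $\pi^{-1}(K)$ for some compact $K\subset S=\mathbb{H}/G$, so that $d_{\mathbb{H}}(p,\mathrm{supp}\,\tilde{\mu})=d_{S}(\pi(p),K)$ for every $p\in\mathbb{H}$. If $x\in\Lambda_{e}(G)$, the geodesic ray $\gamma$ from $0$ to $x$ eventually leaves every compact subset of $S$, whence $d_{\mathbb{H}}(\gamma(t),\mathrm{supp}\,\tilde{\mu})\to\infty$. Since a truncated non-tangential (Stolz) cone with vertex $x$ lies within bounded hyperbolic distance of $\gamma$, the part of such a cone sufficiently close to $x$ is disjoint from $\mathrm{supp}\,\tilde{\mu}$, so the quasiconformal map $F$ is \emph{conformal} on a truncated Stolz cone $\Gamma_{x}$ at $x$. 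A normalized blow-up argument at $x$ (rescaling $F$ toward $x$; the rescaled maps are conformal on cones exhausting a fixed infinite cone and form a normal family thanks to global $K$-quasiconformality) is then intended to produce a conformal tangent map and thereby a genuine nonzero derivative of $h$ at $x$, which is the differentiability phenomenon established by G\"onye \cite{Go} in the $1$-dimensional case.

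The main obstacle is precisely this last step: passing from conformality of $F$ on a mere Stolz cone to an honest two-sided derivative of $h$ along $\mathbb{R}$ at $x$. A full half-disk or horoball of conformality cannot be arranged, because translates of $K$ accumulate at every limit point, so one is genuinely restricted to a cone with a boundary vertex at $x$. Making the argument work therefore requires uniform distortion control for $F$ near that vertex and a proof that the tangent map is nondegenerate (derivative neither $0$ nor $\infty$ and essentially linear), for which I would exploit the global $K$-quasiconformality of $F$ together with its $G$-equivariance to transport estimates along the orbit. Once the pointwise differentiability is available with local uniformity, assembling the countable bi-Lipschitz decomposition of $\Lambda_{e}(G)$ needed for dimension preservation is routine.
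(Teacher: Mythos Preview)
Your overall architecture is exactly the paper's: reduce to $\dim E<1$, use Theorem~\ref{main1} to get $\dim(\Lambda_{e}(G)\setminus E)=1$, establish that $h$ is differentiable with nonzero derivative at every escaping point, and then use the countable bi-Lipschitz decomposition (what the paper records as Lemma~\ref{le2}, due to G\"onye) to transfer dimension $1$ across $h$.

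The one place you diverge is the differentiability step, and the paper's route is considerably cleaner than the blow-up/normal-family argument you sketch. Rather than trying to extract a tangent map from conformality on a Stolz cone (the obstacle you correctly flag), the paper invokes Lehto's pointwise conformality criterion (Lemma~\ref{le1}): if
\[
\iint_{|z|<r}\frac{|\mu(z)|}{|z|^{2}}\,dx\,dy<\infty
\]
for some $r>0$, then the quasiconformal map is conformal (hence differentiable with nonzero derivative) at $0$. After conjugating so that the escaping point is $0$ and the ray is the imaginary axis, the escape condition gives, for $t$ large, $\mathrm{dist}(\gamma(t),\mathrm{supp}\,\tilde\mu)>\delta t$ for some $\delta\in(0,1)$; translating this into Euclidean terms shows that at Euclidean radius $r$ the support of $\tilde\mu$ subtends an angle at most $C r^{\delta}$, so the integrand contributes at most $C r^{\delta-1}$ and the integral converges. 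This is packaged in the paper as Theorem~\ref{diff}, and once you have it the proof of Theorem~\ref{main2} is a three-line assembly of Theorem~\ref{main1}, Theorem~\ref{diff}, and Lemma~\ref{le2}. So your proposal is on the right track; replacing your cone-blow-up discussion with Lehto's criterion removes the gap you yourself identified.
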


Combine with Bishop and Steger's result \cite{BS}, we have

\begin{theorem}\label{main3}
Let $G$ be a Fuchsian group and $h$  a quasi-symmetric homeomorphism of  the real axis $\mathbb{R}$ corresponding to  a compact deformation of $G$. Then  there exists a subset $E\subset \mathbb{R},$ such that
$$\text{max}(\text {dim }(E), \text { dim } h(\mathbb{R}\setminus E))<1\eqno(1.1)$$
if and only if $G$ is a lattice.
  \end{theorem}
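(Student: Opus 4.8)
The plan is to prove the two implications separately. Throughout I assume the compact deformation is non-trivial, so that its Beltrami coefficient does not vanish identically; if it were trivial then $h$ would be M\"obius and hence would preserve Hausdorff dimension, in which case $\mathbb{R}=E\cup(\mathbb{R}\setminus E)$ forces $\max(\dim E,\dim h(\mathbb{R}\setminus E))=1$ for every $E$ and both sides of the equivalence become vacuous.

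First I would dispose of the implication that a lattice admits a splitting set. If $G$ is a lattice then, the deformation being non-trivial, $h$ is a non-M\"obius quasi-symmetric conjugacy of $G$ onto the deformed group; since lattices are of divergence type this $h$ is moreover singular (cf. \cite{Bi}). This is exactly the situation treated by Bishop and Steger \cite{BS}, whose construction produces a set $E\subset\mathbb{R}$ with $\dim E<1$ and $\dim h(\mathbb{R}\setminus E)<1$, that is, (1.1).

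For the converse I would argue by contraposition, showing that if $G$ is not a lattice then $\max(\dim E,\dim h(\mathbb{R}\setminus E))=1$ for every $E\subset\mathbb{R}$, so no set satisfying (1.1) can exist. When $G$ is of divergence type this is precisely Theorem~\ref{main2}, so nothing further is needed. The core of the remaining argument is a single structural input: the existence of a set $A\subset\mathbb{R}$ with $\dim A=1$ on which $h$ is locally bi-Lipschitz, hence dimension preserving. Granting such an $A$, the conclusion is immediate. Writing $A=(A\cap E)\cup(A\setminus E)$ and using the finite stability of Hausdorff dimension gives $\max(\dim(A\cap E),\dim(A\setminus E))=\dim A=1$. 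If $\dim(A\cap E)=1$ then $\dim E=1$; if instead $\dim(A\setminus E)=1$ then, since $h$ preserves dimension on $A$ and $A\setminus E\subset\mathbb{R}\setminus E$, we obtain $\dim h(\mathbb{R}\setminus E)\ge\dim h(A\setminus E)=1$. In either case the maximum equals $1$.

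It remains to produce $A$ when $G$ is non-lattice and of convergence type, and here I would split according to the kind of $G$. If $G$ is of the second kind I would take $A$ to be any interval contained in the ordinary set $\Omega(G)\cap\mathbb{R}$: because the Beltrami coefficient is supported on a compact subset of $\mathbb{H}/G$, it vanishes on the funnel lifts adjacent to such an interval, so $F$ is conformal on a one-sided neighborhood of $A$ in $\mathbb{H}$ and, by Schwarz reflection, $h$ is real-analytic and locally bi-Lipschitz on $A$. If $G$ is of the first kind and of convergence type, then the geodesic flow on $\mathbb{H}/G$ is dissipative, so almost every geodesic ray eventually leaves every compact set; since for a first kind group $\mathbb{R}=\Lambda_c(G)\cup\Lambda_e(G)$ up to a null set, this forces $\Lambda_e(G)$ to have full Lebesgue measure, whence $\dim\Lambda_e(G)=1$, and I would take $A=\Lambda_e(G)$. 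The main obstacle is verifying that $h$ preserves Hausdorff dimension on this $A$: unlike the second kind case, where conformality on an open interval is forced by Schwarz reflection, here $A$ is measure-theoretically large but topologically complicated, and one must show that $h$ is asymptotically conformal at each escaping point, using that the geodesic toward such a point eventually leaves the compact support of the Beltrami coefficient. For this step I would reuse the escaping-geodesic estimates underlying Theorem~\ref{main2} and the analysis of Fern\'andez and Meli\'an \cite{FM}.
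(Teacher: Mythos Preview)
Your proposal is correct and follows essentially the same route as the paper: Bishop--Steger for the lattice direction, Theorem~\ref{main2} for non-lattice divergence groups, and, for non-lattice convergence groups, the production of a set $A$ of Hausdorff dimension $1$ (an arc of the ordinary set when $G$ is of the second kind, the escaping set $\Lambda_e(G)$---shown via \cite{FM} to have positive Lebesgue measure---when $G$ is of the first kind) on which $h$ preserves dimension by the differentiability result of Theorem~\ref{diff} together with Lemma~\ref{le2}. The paper leaves your splitting argument $A=(A\cap E)\cup(A\setminus E)$ implicit and does not flag the trivial-deformation caveat you raise, but the substance is the same.
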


Concerning the 'jungle gym',  by Theorem \ref{main3}, we can generalize G\"{o}nye's result to '$d$-dimensional jungle gym', where $d$ is any positive integer number.

\begin{corollary}\label{main4}
For any positive integer number $d$, suppose $G$ be a covering group of a '$d$-dimensional jungle gym' and $h$  a homeomorphism of  the real axis $\mathbb{R}$ corresponding to  a compact deformation of $G$. Then  for any $E\subset \mathbb{R},$ we have
$$\text{max}(dim(E), dim h(\mathbb{R}\setminus E))=1. \eqno(1.2)$$
\end{corollary}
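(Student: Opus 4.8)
The plan is to derive the corollary from Theorem~\ref{main3} by contraposition, so that essentially all of the analytic content is already packaged there and only a short group-theoretic verification remains. First I would record the trivial upper bound: since $E$ and $h(\mathbb{R}\setminus E)$ are subsets of $\mathbb{R}$, one always has $\dim E\le 1$ and $\dim h(\mathbb{R}\setminus E)\le 1$, hence $\max(\dim E,\dim h(\mathbb{R}\setminus E))\le 1$ for every $E\subset\mathbb{R}$. Consequently the asserted equality (1.2) for all $E$ is equivalent to the nonexistence of any $E\subset\mathbb{R}$ with $\max(\dim E,\dim h(\mathbb{R}\setminus E))<1$. By Theorem~\ref{main3}, such an $E$ can exist only when $G$ is a lattice; so it suffices to show that the covering group of a $d$-dimensional jungle gym is \emph{not} a lattice.

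Next I would verify non-latticeness directly from the definition of the jungle gym. By construction the covering group is $G=N_{0}$, a normal subgroup of the cocompact Fuchsian group $G_{0}$ uniformizing the compact base surface $S_{0}$, with $G_{0}/N_{0}\cong\mathbb{Z}^{d}$. Since $\mathbb{Z}^{d}$ is infinite, the index $[G_{0}:N_{0}]$ is infinite. Hyperbolic area multiplies by the covering degree, so
$$\mathrm{area}(\mathbb{H}/N_{0})=[G_{0}:N_{0}]\cdot\mathrm{area}(\mathbb{H}/G_{0})=\infty,$$
because $G_{0}$ is cocompact and hence $0<\mathrm{area}(\mathbb{H}/G_{0})<\infty$. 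Thus the Dirichlet domain of $N_{0}$ has infinite area and, by the definition of a lattice recalled in the introduction, $N_{0}$ is not a lattice; this is also visible geometrically, since $S^{*}=\mathbb{H}/N_{0}$ is the infinite $\mathbb{Z}^{d}$-periodic surface and manifestly has infinite area.

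Finally I would check that the hypotheses of Theorem~\ref{main3} are met and conclude. The map $h$ in the corollary arises from a compact deformation of $G=N_{0}$, and as noted earlier in the paper such an $h$ is automatically a quasi-symmetric homeomorphism of $\mathbb{R}$, exactly as required in Theorem~\ref{main3}. Since $N_{0}$ is not a lattice, Theorem~\ref{main3} forbids the existence of any $E$ with $\max(\dim E,\dim h(\mathbb{R}\setminus E))<1$; combined with the upper bound $\le 1$ this yields the equality (1.2) for every $E\subset\mathbb{R}$. The step I expect to require the most care is not any estimate but invoking Theorem~\ref{main3} in its full generality: because it is stated for an arbitrary Fuchsian group, the argument needs only the robust fact that an infinite-index normal subgroup of a cocompact group is non-lattice, and therefore applies uniformly to every positive integer $d$ without any further assumption on $N_{0}$. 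When $N_{0}$ happens to be of divergence type one may alternatively appeal to Theorem~\ref{main2} directly, but routing through Theorem~\ref{main3} treats all $d$ in a single stroke.
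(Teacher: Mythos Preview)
Your proposal is correct and follows exactly the route the paper indicates: the corollary is stated as an immediate consequence of Theorem~\ref{main3}, and the only point to check is that the covering group $N_{0}$ of a $d$-dimensional jungle gym is not a lattice, which you verify via the infinite index $[G_{0}:N_{0}]=|\mathbb{Z}^{d}|=\infty$. The paper does not spell out this non-latticeness argument, so your write-up is in fact more detailed than the original.
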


{\noindent {\bf Remark:} } By \cite{AZ} we know that when $d = 1$ or $2$; the
covering  groups  of '$d$-dimensional jungle gyms' are of the divergence type and when $d\geq 3$, the covering  group of '$d$-dimensional jungle gyms' are of  the convergence type.

The remainder of the paper is organized as follows: In section 2 we recall some definitions. In section 3, we give some results about differentiability of Quasi-conformal mappings at escaping limit points. In section 4, we prove Thorem \ref{main1}. In section 5, we prove Thorem \ref{main2} and in section 6, we prove Thorem \ref{main3}.
\section{Preliminaries}

Before give the proofs of the above results, we first recall some definitions.

\noindent{\bf  2.1 Quasi-conformal mapping.} Let $\mathbb{H}$ be the upper half-plane in the complex plane
$\mathbb{C}$. We denote by $M(\mathbb{H})$  the unit sphere of the space $L^{\infty}(\mathbb{H})$ of all essentially bounded Lebesgue  measurable  functions in  $\mathbb{H}$. For a given $\mu \in
M(\mathbb{H})$, there exists a unique quasiconformal self-mapping
$f^{\mu}$ of $\mathbb{H}$ fixing $0$, $1$ and $\infty$, and satisfying the following equation
$$\frac{\partial}{\partial \bar{z}}f^{\mu}(z)=\mu(z)\frac{\partial}{\partial z} f^{\mu}(z),~~~~~a.e. ~z\in \mathbb{H}.$$
We call $\mu$  the Beltrami coefficient of $f^{\mu}.$ It is well known that $f^{\mu}$ can be extended continuously to the real axis $\mathbb{R}$ such that $f^{\mu}$ restricted to $\mathbb{R}$ is a quasisymmetric homeomorphism.

Similarly, there exists a unique quasiconformal homeomorphism
$f_{\mu}$ of the plane $\mathbb{C}$ which is holomorphic in the lower half-plane, fixing $0$, $1$ and $\infty$ and satisfying

$$\frac{\partial}{\partial \bar{z}}f_{\mu}(z)=\mu(z)\frac{\partial}{\partial z} f_{\mu}(z), ~~~~~a.e.~~ z\in \mathbb{H}.$$

\noindent{\bf 2.2 Poincar\'e exponent.}   The critical exponent (or Poincar\'e exponent) of a Fuchsian group $G$ is defined as
\begin{align}\delta(G)&=\inf\{t: \sum_{g\in G}\exp(-t\rho(0,g(0)))<\infty\}\\
&=\inf\{t:\,\sum_{g\in G}(1-\vert g(0)\vert)^t<+\infty\},
\end{align}
where $\rho$ denotes the hyperbolic metric. It has been proven in\cite {BJ}  that for any non-elementary group $G$, $\delta(G)$ is equals to $dim(\Lambda_c(G))$,  the Hausdorff dimension of the conical limit set.

\noindent{\bf 2.2 Hausdorff dimension.}  Let $E$ be a subset of the complex plane $\mathbb{C}$. Suppose $\varphi$ is an nonnegative increasing homeomorphism of $[0,\infty)$.
For $\varphi$ and
$0<\delta\leq \infty$, we  define
$$\mathcal{H}_{\delta}^{\varphi}(E)=inf\{\sum\limits_{i = 1}^\infty  {\varphi(|B_{i}|)} :E \subset \bigcup\limits_{i = 1}^\infty  {{B_i}} , |B_{i}|\leq\delta\},$$
 where $B_{i}\subset \mathbb{C}$ is a set and
$|B_{i}|$ denotes its diameter, the infimum is taken over all open coverings of $E$.  Then  the Hausdorff
measure of $E$ to be

$$\mathcal{H}^{\varphi}(E)=\mathop {\lim }\limits_{\delta  \to 0} \mathcal{H}_{\delta}^{\varphi}(E)=\mathop {\sup }\limits_{\delta  > 0} \mathcal{H}_{\delta}^{\varphi}(E)$$ and the Hausdorff content of $E$ is $\mathcal{H}_{\infty}^{\varphi}(E)$

If $\varphi(t)=t^{\alpha}$, $\alpha\in [0,2] $, we denote $\mathcal{H}^{\varphi}(E)$ by $\mathcal{H}^{\alpha}(E).$
Then one defines the $\alpha$-dimensional Hausdorff
measure of $E$ to be

$$\mathcal{H}^{\alpha}(E)=\mathop {\lim }\limits_{\delta  \to 0} \mathcal{H}_{\delta}^{\alpha}(E)=\mathop {\sup }\limits_{\delta  > 0} \mathcal{H}_{\delta}^{\alpha}(E).$$ One defines the Hausdorff dimension
of $E$ to be

$$dim E=inf \{\alpha:\mathcal{H}^{\alpha}(E)=0\}.$$

\section{Differentiability of Quasi-conformal mappings at escaping limit points revisited }

It is well known that a quasi-conformal mapping of a domian $\Omega$ is differentiable almost everywhere in $\Omega$. In this paper we need the following criterion for pointwise conformality due to Lehto , see  \cite{Le} or (\cite{LV}, Theorem 6.1.).
\begin{lemma}\label{le1}
Let $\Omega$ and $\Omega'$ be two domains in the complex plane $\mathbb{C}$, and let $f$ be a quasi-conformal mapping from  $\Omega$ to $\Omega'$ with Beltrami coefficient $\mu(z)$, where $|\mu(z)|\leq k<1$ almost everywhere in $\Omega.$ If $f$ satisfies
$$\displaystyle\frac{1}{2\pi}\iint_{|z|<r}\frac{|\mu(z)|}{|z|^{2}}dxdy<\infty \eqno(3.1) $$
for some $r>0$, then $f$ is conformal at $z=0.$
\end{lemma}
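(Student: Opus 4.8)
The plan is to reduce the statement to a model problem about asymptotic conformality at an end and then to run the classical modulus-of-ring-domain distortion estimates. First I would normalize so that $f(0)=0$; conformality of $f$ at $0$ means precisely that $\lim_{z\to 0} f(z)/z$ exists and is finite and nonzero, so this is what I aim to establish. The whole point of the weighted hypothesis (3.1) is that it is a finite-total-dilatation condition once one reads it in the right coordinate.

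The first real step is a logarithmic change of variables. Setting $w=\log z$ and $F(w)=\log f(e^{w})$ (a branch exists near the puncture since $f(0)=0$), the punctured disc $\{0<|z|<r\}$ becomes the left half-strip $\{\Re w<\log r\}$ of imaginary width $2\pi$, and the point $z=0$ corresponds to the end $\Re w\to-\infty$. Because $\log$ and $\exp$ are holomorphic, $F$ is again quasiconformal with $|\mu_F(w)|=|\mu_f(e^{w})|$; and the substitution $z=e^{w}$, $dx\,dy=|e^{w}|^{2}\,du\,dv$ together with $|z|^2=|e^w|^2$ turns the hypothesis (3.1) exactly into
$$\iint_{\Re w<\log r}|\mu_F(w)|\,du\,dv<\infty .$$
Thus Lehto's criterion becomes: a quasiconformal map $F$ of the half-strip whose dilatation has finite total area-mass toward the end satisfies $F(w)-w\to c$ as $\Re w\to-\infty$, i.e.\ $f(z)/z\to e^{c}$, which is the infinitesimal form of the Teichm\"uller--Wittich--Belinskii phenomenon.

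To prove this I would slice the end into the round annuli $A_n=\{re^{-(n+1)}<|z|<re^{-n}\}$, each of conformal modulus $\tfrac{1}{2\pi}$, shrinking to $0$. The engine is the additive modulus-distortion inequality for quasiconformal maps: the modulus of the image ring $f(A_n)$ differs from that of $A_n$ by an amount controlled by $\tfrac{1}{2\pi}\iint_{A_n}|\mu|/|z|^{2}\,dx\,dy=:\epsilon_n$, and by hypothesis $\sum_n\epsilon_n<\infty$. Summing these estimates telescopically along the chain of annuli forces the radial behaviour $\log|f(z)|-\log|z|$ to converge, while a companion length--area estimate forces the images $f(A_n)$ to become asymptotically round with centres converging, which pins down the argument of $f(z)/z$ as well; together these give $f(z)/z\to c$. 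Finiteness and non-vanishing of $c$ follow by applying the same two-sided estimate to $f^{-1}$, whose dilatation satisfies the same integrability condition.

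The main obstacle is not the radial bookkeeping but the control of shape. The crude quasiconformal bound only gives the multiplicative comparison $K^{-1}\,\mathrm{mod}(A_n)\le\mathrm{mod}(f(A_n))\le K\,\mathrm{mod}(A_n)$, which is far too weak to sum. The crux is therefore to establish the refined additive estimate in which the deviation is measured by the local dilatation mass $\iint_{A_n}|\mu|/|z|^2$, together with the statement that small total mass forces each image annulus to be nearly a \emph{round} annulus, so that both its modulus and its centre stabilise. This quantitative shape control is the genuine heart of the Teichm\"uller--Wittich--Belinskii theorem, and it is exactly where the hypothesis (3.1) is used in an essential, non-removable way.
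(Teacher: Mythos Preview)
The paper does not supply a proof of this lemma at all: it is stated as a known criterion and attributed to Lehto \cite{Le} and to \cite[Theorem 6.1]{LV}, and is then used as a black box in the proof of Theorem~\ref{diff}. So there is no ``paper's own proof'' to compare against.

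As for your proposal itself: the strategy you outline is a legitimate and classical one. The logarithmic substitution $w=\log z$ is exactly the right move, and your computation that condition~(3.1) becomes $\iint_{\Re w<\log r}|\mu_F|\,du\,dv<\infty$ over one period strip is correct; this recasts the problem as asymptotic conformality at the end of a half-cylinder with finite total dilatation mass. Slicing into the annuli $A_n$ and invoking additive modulus distortion is precisely the Teichm\"uller--Wittich--Belinskii machinery. So the route is sound.

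However, what you have written is a proof \emph{plan}, not a proof. You yourself flag the real gap: the passage from ``the moduli of the image annuli $f(A_n)$ have summably small defect'' to ``the $f(A_n)$ are asymptotically round with converging centres'' is the substantive analytic content, and you do not carry it out. The crude modulus bound gives only radial control of $|f(z)|$; pinning down $\arg(f(z)/z)$ requires either Belinskii's geometric argument on the eccentricity of image rings, or (closer to Lehto's original 1960 paper) the integral representation of $f$ via the Cauchy--Pompeiu formula and a direct estimate of the singular integral. Until one of these is executed, the argument stops exactly at the point you identify as ``the genuine heart'' of the theorem. If your intention was only to indicate where the result comes from, citing \cite{Le} or \cite{LV} as the paper does is entirely adequate; if you mean to give a self-contained proof, the shape-control step needs to be written out.
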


Suppose $G$ be a non-lattice Fuchsian group of divergence type. Let $f$ be a quasi-conformal mapping on the surface $S=\mathbb{H}/G$ whose Beltrami coefficient $\mu$ is supported on a compact subset of $S$. Thus we can choose a point $z_{0}\in S $ and a sufficiently large $r_{0}$ such that the support set of $\mu$ is contained in the disk  $B(z_{0},r_{0}).$ Let $S_{r_{0}}=S\setminus B(z_{0}, r_{0})$ and let $\Omega_{r_{0}}$ be the lift of $S_{r_{0}}$ to the upper plane $\mathbb{H}.$ By the definition of escaping limit points we know that an escaping geodesic eventually stays inside in the region $\Omega_{r_{0}}$ and far from the support of $\mu.$  We can lift $f$ to the upper half plane $\mathbb{H}$ and  get a quasi-conformal homeomorphism $F^{\mu}(z): ~~\mathbb{H}\rightarrow \mathbb{H}.$ Induced by the Beltrami coefficient of $F^{\mu}$ we can get a quasi-conformal homeomorphism of the complex plane $\mathbb{C}$ such that the Beltrami coefficient of $F_{\mu}$ is almost every equal to the one of $F^{\mu}$ on the upper half plane $\mathbb{H}$ and vanishes almost every on the lower half plane $\mathbb{L}.$

 For the quasi-conformal mapping $F_{\mu}$, we have following results which is similar to (\cite{Go }, Theorem 1.1) where G\"onye  discussed the differentiability of quasi-symmetric homeomorphism conjugating the covering groups of 1-dimensional 'Jungle Gym'.

\begin{theorem}\label{diff}
Suppose $G$ be a  Fuchsian group of divergence type and non-lattice,  and let $f$ be a quasi-conformal mapping on the surface $S=\mathbb{H}/G$ so that the  Beltrami coefficient $\mu$ of $f$ is  compactly supported on $S$. Let $F^{\mu}$ be the lifted mapping of $f$ to the upper half plane $\mathbb{H}$  extended to the real axis $\mathbb{R}$. Then $F^{\mu}$ is differentiable at the escaping points $x\in\Lambda_{e}(G)$ with the Jacobian $J(F^{\mu})=|(F^{\mu})'(x)|^{2}$. Furthermore, if $F_{\mu}$ be a quasi-conformal homeomorphism of the complex $\mathbb{C}$ whose Beltrami coefficient is equal to the one of $F^{\mu}$ almost everywhere on the upper half plane $\mathbb{H}$ and vanishes on the lower half plane $\mathbb{L}.$ Then $F_{\mu}$ is conformal  at the escaping limit points $x\in\Lambda_{e}(G).$
\end{theorem}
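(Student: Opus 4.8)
The plan is to verify Lehto's pointwise-conformality criterion (Lemma \ref{le1}) at the boundary point $x$. After translating so that $x=0$, write $\tilde\mu$ for the Beltrami coefficient of $F_{\mu}$ on $\mathbb{C}$, i.e. the $G$-invariant lift of $\mu$ on $\mathbb{H}$ extended by $0$ on $\mathbb{L}$. It then suffices to prove that
\[
I:=\iint_{|z|<r}\frac{|\tilde\mu(z)|}{|z|^{2}}\,dA(z)<\infty
\]
for some $r>0$. Granting this, Lemma \ref{le1} shows that $F_{\mu}$ is conformal at $0$; note that here $0$ is an \emph{interior} point of the domain of $F_{\mu}$, which is exactly why the whole-plane map $F_{\mu}$ is introduced. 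Since $\tilde\mu$ vanishes on $\mathbb{L}$, the two solutions $F_{\mu}$ and $F^{\mu}$ of the same Beltrami equation on $\mathbb{H}$ satisfy $F_{\mu}|_{\mathbb{H}}=\Psi\circ F^{\mu}$ for a conformal $\Psi$; hence $F^{\mu}=\Psi^{-1}\circ F_{\mu}$ inherits complex differentiability at $0$ from within $\overline{\mathbb{H}}$, giving the boundary differentiability of $F^{\mu}$ and the relation $J(F^{\mu})(x)=|(F^{\mu})'(x)|^{2}$. Thus the whole theorem reduces to the single estimate $I<\infty$ for every $x\in\Lambda_{e}(G)$.

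First I would rewrite $I$ hyperbolically. Let $\gamma_{0}$ be the complete geodesic through $0$ and $\infty$ (the imaginary axis), parametrized at unit speed by $\gamma_{0}(s)=ie^{s}$, so that $\gamma_{0}(s)\to 0$ as $s\to-\infty$; near $0$ it is asymptotic to the escaping geodesic ending at $0$. From $\cosh\rho(z,\gamma_{0})=|z|/\mathrm{Im}\,z$ and $dA_{hyp}=(\mathrm{Im}\,z)^{-2}\,dA$ one gets
\[
I\le k\iint_{\mathrm{supp}\,\tilde\mu\,\cap\,\{|z|<r\}}\frac{dA_{hyp}(z)}{\cosh^{2}\rho(z,\gamma_{0})}.
\]
Passing to Fermi coordinates $(s,t)$ along $\gamma_{0}$, where $s$ is arclength and $t$ the signed distance to $\gamma_{0}$ (so $dA_{hyp}=\cosh t\,ds\,dt$), the integrand becomes $\cosh^{-1}t\,ds\,dt$, whose integral over a slab $\{|t|\ge R\}$ is $O(e^{-R})$.

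The geometric input is the escaping hypothesis. Since $\mu$ is supported in a compact set $K_{0}\subset S$, the support of $\tilde\mu$ is the $G$-orbit of one compact lift $\tilde K_{0}$, and for $x\in\Lambda_{e}(G)$ the projected geodesic eventually leaves every compact subset of $S$; hence
\[
\rho\big(\gamma_{0}(s),\,\mathrm{supp}\,\tilde\mu\big)=d_{S}\big(\pi\gamma_{0}(s),K_{0}\big)\longrightarrow\infty \qquad (s\to-\infty).
\]
Consequently, on each fibre $\{s=\mathrm{const}\}$ near $0$ the support of $\tilde\mu$ is confined to $\{|t|\ge R(s)\}$ with $R(s)\to\infty$, the inner $t$-integral is $O(e^{-R(s)})$, and $I$ is controlled by $\int e^{-R(s)}\,ds$ over the range $s\to-\infty$. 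Establishing that this is finite is the step I expect to be the main obstacle: it requires the escape to be fast enough and is \emph{not} implied by $R(s)\to\infty$ alone. To close it I would group the lifts $g(\tilde K_{0})$ by the dyadic scale $|g(\tilde z)|\approx 2^{-n}$ and by their distance-shell $\{\rho(\cdot,\gamma_{0})\approx\ell\}$, bound the number of lifts in each shell by an orbit-counting estimate (using the compactness of $\tilde K_{0}$ and $\delta(G)\le 1$), and sum the resulting weights $e^{-2\ell}$ into a convergent series, yielding $I<\infty$. This is precisely the mechanism behind the results of Fern\'andez--Melián \cite{FM} and G\"onye \cite{Go}, and it is here that the hypotheses of divergence type and non-lattice (the compact deformation against an infinite-area base) enter in an essential way; once $I<\infty$ is secured, the conclusions follow as in the first paragraph.
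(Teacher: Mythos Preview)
Your reduction to Lehto's criterion (Lemma~\ref{le1}) and the hyperbolic rewriting of the integral are exactly the paper's starting point. The divergence comes at the step you single out as ``the main obstacle.'' The paper does \emph{not} use any orbit-counting. Instead, after normalizing so that $x=0$ and the geodesic is $\gamma(t)=ie^{-t}$, it simply asserts a \emph{linear} lower bound on the escape rate: there exist $t_{0}>1$ and $\delta\in(0,1)$ such that
\[
\mathrm{dist}\bigl(\gamma(t),\ \mathbb{H}\setminus\Omega_{R_{0}}\bigr)>\delta t\qquad(t>t_{0}).
\]
Granting this, the paper works in polar rather than Fermi coordinates: on the circle $|z|=r$ the support of $\tilde\mu$ lies outside the hyperbolic disc of radius $-\delta\ln r$ about $ir$, whose Euclidean description gives an angular width $\theta_{1}(r)\lesssim r^{\delta}$, and hence
\[
I\ \lesssim\ \int_{0}^{r_{0}}\frac{\theta_{1}(r)}{r}\,dr\ \lesssim\ \int_{0}^{r_{0}}r^{\delta-1}\,dr<\infty.
\]
So your diagnosis that ``$R(s)\to\infty$ alone is not enough'' is entirely correct; the paper's substitute for your proposed counting argument is precisely the (unjustified in the text) linear-rate claim, which in your Fermi picture amounts to $R(s)\ge\delta|s|$. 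Your plan is therefore more cautious than the paper's, not different in kind. One correction, though: contrary to your last sentence, the divergence-type and non-lattice hypotheses play \emph{no} role in the paper's proof of this theorem; they are used only later, in Theorems~\ref{main1}--\ref{main3}.

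A second, minor difference: to transfer conformality from $F_{\mu}$ to boundary differentiability of $F^{\mu}$, the paper does not factor through a conformal $\Psi$ as you do. It instead reflects, setting $F(z)=F^{\mu}(z)$ on $\mathbb{H}$ and $F(z)=\overline{F^{\mu}(\bar z)}$ on $\mathbb{L}$, and applies Lemma~\ref{le1} directly to $F$ (the symmetrized Beltrami coefficient satisfies the same bound, doubled). This sidesteps any question about boundary regularity of $\Psi^{-1}$ at $F_{\mu}(0)$ that your route would otherwise have to address.
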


\begin{proof}
As the statements of the theorem, we can choose a point $p_{0}\in S$ and a sufficiently large $R_{0}$ such that the support set of $f$ is contained in the disk  $B(p_{0}, R_{0}).$ Let $S_{R_{0}}=S\setminus B(p_{0}, R_{0})$ and let $\Omega_{R_{0}}$ be the lift of $S_{R_{0}}$ to the upper half plane $\mathbb{H}.$ By the definition of the escaping limit points, we know that an escaping geodesic eventually stays inside the region $\Omega_{R_{0}}$ and far from the support of $\mu.$

Since the {\mobi} transformations which keep the upper half plane invariant do not change the hyperbolic geometry properties(such as hyperbolic area of subset of $\mathbb{H}$ and hyperbolic distance between two points) of the upper half plane, with the conjugation of such {\mobi} transformations, we suppose $x=0$ and the initial point of the geodesic ray is $i$, denote the geodesic by $ \gamma(t)$, where $t$ is the arc-length parametrization with $\gamma(0)=i$ and $\lim_{t\rightarrow\infty}\gamma(t)=0.$ By the definition of escaping geodesic, there is a region such that none of the lifted pre-images of $B(p_{0}, R_{0})$ will hit the escaping geodesics eventually. Hence there is a sufficiently large $t_{0}$ $(t_{0}>1)$ and a $\delta\in (0,1)$, for $t>t_{0}$, dist$(\gamma(t), \mathbb{H}\setminus \Omega_{R_{0}})>\delta t>R_{0},$ where dist$(\cdot,\cdot)$ denote the hyperbolic distance between two points.

Let $r_{0}=e^{-t_{0}}$ and $\mu_{F}$ be the Beltrami coefficient of $F_{\mu}$. In the following, we will show that the integral $$\displaystyle\frac{1}{2\pi}\iint_{|z|<r_{0}}\frac{|\mu_{F}(z)|}{|z|^{2}}dxdy \eqno(3.2)$$
is finite.

Since the Beltrami coefficient  $\mu_{F}$ vanishes in the regions $\Omega_{R_{0}}$ and the lower half plane $\mathbb{L}$, we need to show that the integral (3.2) is finite in a neighborhood of $0$ outside the regions $\Omega_{R_{0}}$ and $\mathbb{L}.$ We will use polar coordinateto estimate the integral of (3.2). We have
$$\displaystyle\frac{1}{2\pi}\iint_{|z|<r_{0}}\frac{|\mu_{F}(z)|}{|z|^{2}}dxdy \leq \int^{r_{0}}_{0}dr\int^{\theta_{1}(r)}_{0}\displaystyle\frac{1}{r}d\theta
+ \int^{r_{0}}_{0}dr\int^{\theta_{2}(r)}_{0}\frac{1}{r}d\theta,\eqno(3.3)$$
where $\theta_{1}(r)$ and $\theta_{2}(r)$ are the arguments of the points which are the intersction of the hyperbolic circle
dist$(ir, z)=-\delta\ln r $ with the Euclidean circle $|z|=r$, where $r<1.$ Since the region is relative $\gamma (t)$ symmetry, we only need to show that
the integral  $$ \int^{r_{0}}_{0}dr\int^{\theta_{1}(r)}_{0}\displaystyle\frac{1}{r}d\theta \eqno(3.4)$$
 is finite.
By some easy calculation or see (\cite{Be}, Page 131), we know that the hyperbolic circle
dist$(ir, z)=-\delta\ln r $ is just the Euclidean circle

$$|z-ir\displaystyle\frac{r^{\delta}+r^{-\delta}}{2}|=r(\displaystyle\frac{r^{-\delta}-r^{\delta}}{2}).\eqno(3.4)$$
 Let $Q=x+iy$ be the intersection points of the hyperbolic circle
dist$(ir, z)=-\delta\ln r $ with the Euclidean circle $|z|=r$ in the first quadrant. Combine (3.5), we have
the imaginary part of $Q$ satisfies the equation $$y=\displaystyle\frac{2r}{r^{\delta}+r^{-\delta}}.$$
Therefore

 \begin{equation*}
\begin{aligned}
\sin \theta_{1}(r)=\displaystyle\frac{y}{r}=\displaystyle\frac{2}{r^{\delta}+r^{-\delta}}
\leq r^{\delta}.
\end{aligned}
\end{equation*}
Since for $\theta\in (0, \displaystyle\frac{\pi}{2})$, $\displaystyle\frac{2}{\pi}\theta \leq sin\theta,$ we have
$$\theta_{1}(r)\leq \displaystyle\frac{2}{\pi}r^{\delta}. \eqno (3.5)$$
Hence the integral   $$\int^{r_{0}}_{0}dr\int^{\theta_{1}(r)}_{0}\displaystyle\frac{1}{r}d\theta\leq \int^{r_{0}}_{0}r^{1-\delta}dr$$
is finite. Further more we have
$$\displaystyle\frac{1}{2\pi}\iint_{|z|<r_{0}}\frac{|\mu_{F}(z)|}{|z|^{2}}dxdy <\infty.\eqno(3.6)$$ By Lemma \ref{le1}, we know that $F_{\mu}$ is conformal at $0$.

Let \begin{equation*}
F(z)=\left\{
\begin{aligned}
F^{\mu}(z) &,& z\in \mathbb{H}, \\
\overline{F^{\mu}(\bar{z})} & ,& z\in \mathbb{C}\setminus\mathbb{H}.
\end{aligned}
\right.
\end{equation*}
By the symmetry of $F$ and (3.6), we have that
the quasi-symmetric homeomorphism $F^{\mu}|\mathbb{R}$ are differentiable at the points $x\in\Lambda_{e}(G)$.
\end{proof}

 For a compact quasi-conformal deformation as the statement of Theorem 3.1, in \cite{BJ}, Bishop and Jones use the properties of Schwarzian derivative of $F_{\mu}$ to estimate the Hausdorff dimension of $\Lambda_{e}(G)$ and $F_{\mu}(\Lambda_{e}(G))$, they showed
 \begin{theorem}\cite{BJ}
 As the statements of Theorem \ref{diff},  dim$\Lambda_{e}(G)$ is equal to $F_{\mu}(\Lambda_{e}(G))$, where dim$(\cdot)$ denotes the Hausdorff dimension of the set.
 \end{theorem}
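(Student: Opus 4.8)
The plan is to read off the dimension equality directly from the pointwise conformality established in Theorem \ref{diff}, rather than reconstructing the Schwarzian-derivative estimates of \cite{BJ}. By Theorem \ref{diff}, the map $F_{\mu}$ is conformal at every escaping limit point $x\in\Lambda_{e}(G)$; that is, $F_{\mu}$ is complex differentiable at each such $x$ with a nonvanishing derivative $F_{\mu}'(x)\neq 0$, so that near $x$ one has $F_{\mu}(z)=F_{\mu}(x)+F_{\mu}'(x)(z-x)+o(|z-x|)$. The strategy is to use this asymptotic behaviour to show that $F_{\mu}$ neither raises nor lowers the Hausdorff dimension of $\Lambda_{e}(G)$, which is exactly the asserted equality $\dim\Lambda_{e}(G)=\dim F_{\mu}(\Lambda_{e}(G))$.

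First I would establish the upper bound $\dim F_{\mu}(\Lambda_{e}(G))\le\dim\Lambda_{e}(G)$. The difficulty is that conformality at a point is only an asymptotic, non-uniform statement, so $F_{\mu}$ need not be globally Lipschitz on $\Lambda_{e}(G)$ and the local dilatations $|F_{\mu}'(x)|$ need not be bounded. To get around this I would use the standard countable decomposition: for integers $k,m\ge 1$ set
$$E_{k,m}=\Big\{x\in\Lambda_{e}(G):|F_{\mu}(y)-F_{\mu}(x)|\le k\,|y-x|\ \text{whenever}\ y\in\Lambda_{e}(G),\ |y-x|\le \tfrac{1}{m}\Big\}.$$
Since $F_{\mu}'(x)$ exists and is finite at every $x\in\Lambda_{e}(G)$, the pointwise Lipschitz bound gives $\Lambda_{e}(G)=\bigcup_{k,m}E_{k,m}$. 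Subdividing each $E_{k,m}$ into pieces of diameter at most $1/m$, the restriction of $F_{\mu}$ to each piece is $k$-Lipschitz, so $\mathcal{H}^{\alpha}(F_{\mu}(\text{piece}))\le k^{\alpha}\,\mathcal{H}^{\alpha}(\text{piece})$ for every $\alpha$; in particular a piece of dimension $<\alpha$ maps to a set of dimension $<\alpha$. Countable stability of Hausdorff dimension then yields the upper bound.

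For the reverse inequality I would run the same mechanism on the inverse map $F_{\mu}^{-1}$ over the image $F_{\mu}(\Lambda_{e}(G))$. The crucial point is that, because $F_{\mu}$ is a homeomorphism of $\mathbb{C}$ which is differentiable at $x$ with the invertible (indeed conformal) derivative $F_{\mu}'(x)\neq 0$, its inverse is automatically differentiable at $y_{0}=F_{\mu}(x)$ with derivative $1/F_{\mu}'(x)$. Concretely, writing $y-y_{0}=F_{\mu}'(x)(z-x)+o(|z-x|)$ and using $F_{\mu}'(x)\neq 0$ one obtains $|z-x|\le C|y-y_{0}|$ for $y$ near $y_{0}$, whence $F_{\mu}^{-1}(y)=x+F_{\mu}'(x)^{-1}(y-y_{0})+o(|y-y_{0}|)$. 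Thus $F_{\mu}^{-1}$ is conformal at every point of $F_{\mu}(\Lambda_{e}(G))$, and the decomposition argument applied to $F_{\mu}^{-1}$ gives $\dim\Lambda_{e}(G)=\dim F_{\mu}^{-1}(F_{\mu}(\Lambda_{e}(G)))\le\dim F_{\mu}(\Lambda_{e}(G))$. Combining the two inequalities proves the statement.

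I expect the main obstacle to be precisely the non-uniformity of pointwise conformality: the heart of the argument lies in organizing the double-indexed decomposition so that $F_{\mu}$ (and later $F_{\mu}^{-1}$) is genuinely Lipschitz on each piece, together with the verification that differentiability with nonvanishing derivative is inherited by the inverse homeomorphism at the image points. Once these two points are in place, the dimension equality follows from the elementary behaviour of Hausdorff measure under Lipschitz maps and countable stability, with the nonvanishing of $F_{\mu}'(x)$ being exactly what is needed to upgrade the one-sided bound into an equality.
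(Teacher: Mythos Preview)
Your proposal is correct and follows essentially the same route as the paper: both arguments start from the pointwise conformality of Theorem \ref{diff}, pass to a countable double-indexed decomposition of $\Lambda_{e}(G)$ on whose pieces $F_{\mu}$ is uniformly (bi-)Lipschitz, and then invoke countable stability of Hausdorff dimension. The only cosmetic difference is that the paper builds the lower Lipschitz bound into the decomposition from the outset (defining $\Lambda_{n,k}$ so that $\tfrac{1}{2n}|B_{x}|\le |F_{\mu}(B_{x})|\le 2n|B_{x}|$ and obtaining the two-sided measure inequality (3.9) directly), whereas you obtain the upper bound first and then recover the lower bound by running the same one-sided argument on $F_{\mu}^{-1}$; this is the same mechanism reorganized.
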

As an application of Theorem \ref{diff}, we give a new proof of Bishop and Jones result.

\begin{proof}

By Theorem \ref{diff}  we know
$$ \Lambda_{e}(G)=\{x: F_{\mu}'(x)\text{  exists and non-zero, } x\in\Lambda_{e}(G).\}$$

Define the set  $$ \Lambda_{n}=\{x: \frac{1}{n}\leq |F_{\mu}'(x)|,  x\in\Lambda_{e}(G),\}$$
it is easy to see  $  \Lambda_{e}(G)=\cup_{n=1}^{\infty}\Lambda_{n}$ and $\Lambda_{n}\subset\Lambda_{n+1}.$

Hence $ \Lambda_{e}(G)=\underset{n\rightarrow\infty}\lim \Lambda_{n}.$

For  $x\in \Lambda_{n},$ we can choose a $\delta_{x}$ such that, for $|z-x|<\delta_{x}$,
$$\displaystyle\frac{1}{2n}\leq \displaystyle\frac{|F_{\mu}(z)-F_{\mu}(x)|}{|z-x|}\leq 2n.$$
This means that for each $x\in \Lambda_{n},$ there exists a constant $\delta_{x}$, such that for all  neighborhood $B_{x}$ of $x$ with $|B_{x}|<\delta_{x}$, $$\displaystyle\frac{1}{2n}|B_{x}|\leq F_{\mu}(|B_{x}|)\leq 2n|B_{x}|.\eqno (3.7)$$
Note that for fixed number $n$, the choice of constant $\delta_{x}$ depends on the points $x.$ To get rid of the dependence on $x$, define the set
$$ \Lambda_{n,k}=\{x:  x\in\Lambda_{n}, \forall |B_{x}| \text{ with } |B_{x}|<\displaystyle\frac{1}{k},~~ \displaystyle\frac{1}{2n}|B_{x}|\leq F_{\mu}(|B_{x}|)\leq 2n|B_{x}|.\}\eqno (3.8)$$
It is easy to see that $ \Lambda_{n,k}\subset  \Lambda_{n,k+1}$ and $\Lambda_{n}=\underset{k\rightarrow\infty}\lim \Lambda_{n,k}.$

In the following, we will show that for $\alpha\in (0,2),$
the Hausdorff measures of  $\mathcal{H}^{\alpha}(F_{\mu}(\Lambda_{n,k}))$ and $\mathcal{H}^{\alpha}(\Lambda_{n,k})$ satisfy
$$\displaystyle (\frac{1}{2n})^{\alpha}\mathcal{H}^{\alpha}(\Lambda_{n,k})\leq   \mathcal{H}^{\alpha}(F_{\mu}(\Lambda_{n,k})) \leq (2n)^{\alpha}\mathcal{H}^{\alpha}(\Lambda_{n,k}). \eqno (3.9)$$

We first show that the second inequality of $(3.9)$ holds.

For fixed $n$ and $k$, suppose $\{B_{i}\}$ is a cover of  $\Lambda_{n,k}$ with $|B_{i}|<\displaystyle \frac{1}{2nj}$, where $j\geq k.$
Then by the definition of $\Lambda_{n,k}$ we know that the sequence $\{F_{\mu}(B_{i})\}$ is a cover of $F_{\mu}(\Lambda_{n,k})$ with $$|F_{\mu}(B_{i})|<\displaystyle \frac{1}{j}.$$

For any  $\alpha\in (0,2)$, we have $$\mathcal{H}^{\alpha}_{1/j}(F_{\mu}(\Lambda_{n,k}))\leq \sum_{i=1}^{\infty} |F_{\mu}(B_{i})|^{\alpha}\leq \sum_{i=1}^{\infty}(2n|B_{i}|)^{\alpha}.\eqno(3.10)$$
Take the infimum of the right of $(3.9)$, we obtain
$$\mathcal{H}^{\alpha}_{1/j}(F_{\mu}(\Lambda_{n,k}))\leq(2n)^{\alpha}\mathcal{H}^{\alpha}_{1/2nj}(\Lambda_{n,k}).$$
Let $j$ tend to infinity, the $\alpha$- dimensional Hausdorff measures of  $F_{\mu}(\Lambda_{n,k})$ and $\Lambda_{n,k}$ satisfies
$$\mathcal{H}^{\alpha}(F_{\mu}(\Lambda_{n,k}))\leq(2n)^{\alpha}\mathcal{H}^{\alpha}(\Lambda_{n,k}).\eqno(3.11)$$
By (3.8) and the similar discussion as above we can get that the first inequality of (3.9) also holds.
By the definition of Hausdorff dimension, the inequalities (3.9) show that, for fixed $n, k$, the Hausdorff dimension of $\Lambda_{n,k}$ is the same as its image under the map $F_{\mu}.$ Since the dimension is preserve for every $n$ and $k$, hence we have
$$\text{dim }F_{\mu}(\Lambda_{e})=\text{dim}(\Lambda_{e}).$$ This completes the proof of this theorem.
\end{proof}

Now it is time to give the proof of Theorem\ref{main1}.

\section{Proof of Theorem \ref{main1} }
Let $G$ be a non-lattice  divergence Fuchsian group and $f$ be a quasi-conformal mapping of the surface $S=\mathbb{H}/G$ whose Beltrami coefficients is  compactly supported  on $S$.  As the statements of Theorem \ref{diff}, let $F_{\mu}$ be a quasi-conformal of the complex plane $\mathbb{C}$ which has  the same  Beltrami coefficient with the lifted mapping $F^{\mu}$ of $f$ to the upper half plane $\mathbb{H}$ and is conformal on the lower half plane $\mathbb{L}.$  By (\cite{BJ1}, Theorem 1.3), we know that the  $1$-dimensional Hausdorff  measure of $F_{\mu}(\Lambda_{e}(G))$ is zero.  Hence, as the notations in the proof of Theorem \ref{diff}, for fixed numbers $n$ and $k$, the Hausdorff measure of the subset $F_{\mu}(\Lambda_{n, k})$ is zero.  By (3.9) in the proof of Theorem \ref{diff}, we know, for fixed $n$ and $k$, the $1$-dimensional Hausdorff measure of $\Lambda_{n,k}$ is zero. Furthermore the  $1$-dimensional Hausdorff measure of $\Lambda_{e}(G)$ is zero.

In the following of this section we will show that the Hausdorff dimension of $\Lambda_{e}(G)$ is 1.

Since $G$ is non-lattice, the area of the surface $S$  and the generators of $G$ are both infinity.  The method we used here is from \cite{FM}. For the reader to better understand the distribution of the geodesics corresponding to $\Lambda_{e}(G)$ on the surface $S$, we give the detail of the proof here.

We first recall the definition of geodesic domain. A domain $D \subset S$ is called a geodesic domain if its relative boundary consists of finitely many non-intersecting closed simple geodesics and its area is finite. Fix a point $P_{0}\in S$, by (\cite{FM}, Theorem 4.1), we know that there exists a family $\{D_{i}\}_{i=0}^{\infty}$ of pairwise disjoint (except the boundary) geodesic domains  in $S$ satisfying that the boundary of $D_{i}$ and $D_{i+1}$ have at least a simple closed geodesic in common and $\lim_{i\rightarrow\infty}\text{ dist }(P_{0}, D_{i})=\infty,$ where dist$(\cdot, \cdot)$ denotes the hyperbolic distance of the surface $S.$

Let $\{D_{i}\}_{i=0}^{+\infty}$ be the family of geodesic domains of $S$ constructed as above.
 For any $i$, let $S_{i}$ be the Riemann surface obtained from $D_{i}$ by gluing a funnel along each one of the simple closed geodesics of its boundary. For each $i$, we choose a simple closed geodesic  $\gamma_{i}$ from the common boundary $D_{i}\cap D_{i+1}$ and a point $P_{i}\in \gamma_{i}.$ By (\cite{FM}, Theorem 4.1), we have
 $\delta_{i}\rightarrow 1$ when $i$ tends to infinity, where $\delta_{i}$ is the Poincare exponent of the surface $S_{i}.$

 For $\theta\in (0, \frac{1}{2}\pi)$, by (\cite{FM}, Theorem 5.1), we can choose a collection $\mathcal{B}_{i}$ of geodesics in $S_{i}$ with initial and final endpoint $P_{i}$ such that
$$L_{i}\leq\text{length}(\gamma)\leq L_{i}+C(P_{i}),\, \gamma\in \mathcal{B}_{i}£¬$$
where $L_{i}$ is a constant such that $L_{i}\rightarrow\infty$ as $i\rightarrow\infty$, $C(P_{i})$ is a constant depending only on the length of the geodesic $\gamma_{i}$, and $\sigma_{i}<\delta(S_{i})$, $\sigma_{i}\rightarrow 1$ as $i\rightarrow\infty$.

 The number of geodesic arcs in $B_{i}$ is at least $e^{L_{i}\sigma_{i}}$, and both the absolute value of the angles between $\gamma$ and the closed geodesic $\gamma_{i}$ are less than or equal to $\theta.$

 Note that for each $i$, $D_{i}$ is the convex core of $S_{i},$
implying that every geodesic arc $\gamma\in \mathcal{B}_{i}$ is contained in the convex core $D _{i}.$

  Furthermore, for each $i$, we may choose a geodesic arcs $\gamma_{i}^{*}$ with initial point $P_{i}$ and final endpoint $P_{i+1}$ such that
 $$L_{i}\leq\text{length}(\gamma^{*}_{i})\leq L_{i}+C(P_{i+1}),$$ and both the absolute value of the angles between $\gamma_{i}$, $\gamma_{i}^{*}$, and $\gamma_{i}^{*}$, $\gamma_{i+1}$  are less than or equal to $\theta.$

 In order to show the distribution of geodesics on $S$, we are going to construct a tree $\mathfrak{T}$ consisting of oriented geodesic arcs in the unit disk $\Delta.$

 Let us first lift $\gamma_{0}^{*}$ to the unit disk starting at $0$ (without loss of generality we may suppose that 0 projects onto $P_{0}$).  From the endpoint of the lifted $\gamma_{0}^{*}$ (which project onto $P_{1}$), lift the family $\mathfrak{B}_{1}$; from each of the end points of these liftings (which still project onto $P_{1}$), lift again $\mathfrak{B}_{1}$. Keep lifting $\mathfrak{B}_{1}$ in this way a total of $M_{1}$ times.

 Next, from each one of the endpoints obtained in the process above, we lift $\gamma_{1}^{*}$, and from each one of the endpoints of the liftings of $\gamma_{1}^{*}$ (which project onto $P_{2}$), we lift the collection $\mathfrak{B}_{2}$ sucessively $M_{2}$ times as above. Continuously this process
 indefinitely we obtain a tree $\mathfrak{T}.$

 It is easy to see that $\mathfrak{T}$ contains uncountably many branches.  The tips of the branches of $\mathfrak{T}$ are contained in the escaping limit set $\Lambda_{e}(G)$ of the covering group of $S$. For suitably choosing the sequence $\{M_{i}\}$ of repetitions, the dimension of the rims of tree $\mathfrak{T}$ is 1. By the construction of the tree $\mathfrak{T}$, we see that the tree $\mathfrak{T}$ is a unilaterally connected graph. Hence the geodesic corresponding to any branch of $\mathfrak{T}$ does not tend to the funnel with boundary $\gamma.$ Hence the dimension of the escaping limit set $\Lambda_{e}(G)$ of the covering group $G$ is 1.

\section{Proof of Theorem \ref{main2} }
 To prove this theorem, we need the following lemma which is essentially due to G\"{o}nye, see (\cite{Go}, P29.)

 \begin{lemma}\label{le2}
 Let $F$ be a quasi-symmetric homeomorphism of the real axis $\mathbb{R}$ and $A$ be a subset of $\mathbb{R}$ with Hausdorff dimension equal to $1$. If for any $x\in A$, $F'(x)$ exists and is non-zero, then the Hausdorff dimension of $F(A)$ is also $1.$
 \end{lemma}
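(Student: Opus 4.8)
The plan is to deduce the statement from two standard facts: the countable stability of Hausdorff dimension, $\dim\bigcup_j E_j=\sup_j\dim E_j$, and the fact that a map which is bi-Lipschitz on a set preserves the Hausdorff dimension of that set. The differentiability hypothesis will be used only to manufacture the bi-Lipschitz pieces, and quasi-symmetry enters only through the fact that $F$ is a homeomorphism, so that $F$ is injective and $F^{-1}$ is continuous. Since $F(A)\subset\mathbb{R}$ we have $\dim F(A)\le 1$ for free, so it suffices to prove $\dim F(A)\ge 1$.

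First I would stratify $A$ by the size of the derivative. For each positive integer $n$ set
$$A_n=\{x\in A:\ |F'(x)|\ge 1/n\}.$$
Since $F'(x)$ exists and is non-zero at every point of $A$, the sets $A_n$ increase to $A$, so countable stability gives $\sup_n\dim A_n=\dim A=1$.

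Next, exactly as in the passage from (3.7) to (3.8) above, I would remove the dependence of the differentiability scale on the base point. For positive integers $n,k$ put
$$A_{n,k}=\Bigl\{x\in A_n:\ |F(y)-F(x)|\ge\frac{1}{2n}|y-x|\ \text{whenever}\ |y-x|<1/k\Bigr\}.$$
The definition of the derivative shows $A_n=\bigcup_k A_{n,k}$ with $A_{n,k}\subset A_{n,k+1}$, so $\sup_k\dim A_{n,k}=\dim A_n$. On $A_{n,k}$ the map $F$ expands distances by at least $1/2n$ at every scale below $1/k$; since $F$ is a homeomorphism, the inverse of $F|_{A_{n,k}}$ is then Lipschitz with constant $2n$ at small scales, and Lipschitz maps do not increase Hausdorff dimension. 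Equivalently, running the covering estimate that produced the first inequality of (3.9), now with $F$ in place of $F_{\mu}$, yields
$$\Bigl(\frac{1}{2n}\Bigr)^{\alpha}\mathcal{H}^{\alpha}(A_{n,k})\le\mathcal{H}^{\alpha}(F(A_{n,k})),\qquad \alpha\in(0,1),$$
and hence $\dim F(A_{n,k})\ge\dim A_{n,k}$.

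Finally, since $F(A)\supset F(A_{n,k})$ for all $n,k$, I would conclude
$$\dim F(A)\ \ge\ \sup_{n,k}\dim F(A_{n,k})\ \ge\ \sup_{n,k}\dim A_{n,k}\ =\ \sup_n\dim A_n\ =\ 1,$$
which together with $\dim F(A)\le 1$ gives $\dim F(A)=1$. The only genuinely delicate point is the uniformity step---passing from the pointwise estimate, whose scale depends on the point $x$, to the sets $A_{n,k}$ on which a single scale $1/k$ works; this is handled verbatim by the stratification already used above, and the continuity of $F^{-1}$ is what lets the small-scale Lipschitz bound for the inverse take effect. The remaining ingredients are routine.
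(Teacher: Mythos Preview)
Your argument is correct. The paper does not actually supply its own proof of this lemma; it simply attributes the result to G\"onye and cites \cite{Go}. However, the stratification you carry out---first by the size of $|F'(x)|$ via the sets $A_n$, then by the scale at which the derivative estimate becomes effective via the sets $A_{n,k}$---is precisely the decomposition the paper uses in its proof of the Bishop--Jones equality $\dim F_{\mu}(\Lambda_e)=\dim\Lambda_e$ (the argument surrounding (3.7)--(3.11)), and you explicitly invoke (3.7)--(3.9) from that proof. So your proposal is in complete agreement with the method the paper itself employs for the companion result, and no genuinely different idea is introduced. The one point worth making explicit is why closeness of $F(x)$ and $F(x')$ forces $|x-x'|<1/k$ so that the expansion bound applies: this follows from monotonicity of $F$ on $\mathbb{R}$ (a consequence of $F$ being a homeomorphism), exactly as you indicate in your final remark about the continuity of $F^{-1}$.
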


 Now we give the proof of Theorem \ref{main2}.
 \begin{proof}
Let $G$ be a Fuchsian group of divergence type and not a lattice. Let $f$ be a quasi-conformal mapping on the surface  $\mathbb{H}/G.$ The lifting mapping $F^{\mu}$ of $f$ to the upper half plane $\mathbb{H}$ can extend to the real axis $\mathbb{R}$ naturally. We denote by $h=F^{\mu}|\mathbb{R}.$ The mapping $h$ is a quasi-symmetric homeomorphism of $\mathbb{R}.$ By Theorem \ref{diff}, the homeomorphism $h$ is differentiable at x in $\Lambda_{e}(G)$ with $|F^{\mu}(x)|\neq 0.$ By Theorem \ref{main1} and Lemma \ref{le2}, we know, for any $E\subset \mathbb{R},$
$$\text{max}(dim(E), dim f(\mathbb{R}\setminus E))=1.$$ Hence the theorem holds.
\end{proof}

\section{Proof of Theorem \ref{main3} }

\begin{proof}
The necessity of the equivalence is from (\cite{BS}, Theorem 4).

For the  sufficient condition, by Theorem \ref{main2}, we only need to show the case when $G$ is a Fuchsian group of the convergence type.

If $G$ is a Fuchsian group of the second, the boundary of Dirichlet fundamental domain contains at least an arc (denoted by $\alpha^{*}$) in $\mathbb{R}$.  It is easy to see that the homeomorphism is smooth on $\alpha^{*}$. Hence the sufficient condition holds.
 If $G$ is a  Fuchsian group of convergence type and of the first kind, we need to show that the Hausdorff dimension of the escaping limit set $\Lambda_{e}(G)$ is 1, actually it has positive $1$-dimensional Hausdorff measure.

Suppose $\gamma$ be a  closed geodesic on the surface $S=\mathbb{H}/G$. Consider the liftings of the closed geodesic $\gamma$ in the upper half plane $\mathbb{H}$. It consists of a nested set  $\Sigma$ of hyperbolic lines: the one intersecting the Dirichlet fundamental domain cuts it in two parts and we may assume that the point $i$ belongs to a part that has infinite (hyperbolic) area. The hyperbolic lines in $\Sigma$ of the first generation define a two-by-two disjoint family $(I_j)$ of intervals of the real axis $\mathbb{R}$. Suppose$\underset{i=1}\cup I_j$ is equal to $\mathbb{R}$  except a zero Lebesgue measure set,
 then almost every geodesic issued from $i$ would visit $\gamma$ infinitely often, contradicting of (\cite{FM}, Theorem 1). Thus the set of geodesics from $i$ that never visit $\gamma$ has positive measure. It follows that  the escaping limit set of $S$ has positive Lebesgue measure.

 Hence if $F^{\mu}$ corresponding to a compact quasi-ocnformal deformation of $G$,  we always have, for any $E\subset \mathbb{R}$,
 $$\text{max}(dim(E), dim f(\mathbb{R}\setminus E))=1.$$
\end{proof}

\end{document}